\newtheorem{theorem}{Theorem}  
\newtheorem{lemma}{Lemma}
\newtheorem{prop}{Proposition}
\title{Minimal tropical basis for Bergman fan of matroid}
\date{\today}
\author{Nakajima Yasuhito}
\begin{document}  
\maketitle
\begin{abstract} 
The Bergman fan of a matroid is the intersection of tropical hyperplanes defined by
the circuits. A tropical basis is a subset of the circuit set that defines the Bergman fan.
Yu and Yuster  posed a question whether every simple regular
matroid has a unique minimal tropical basis of its Bergman fan, and
verified it for graphic, cographic matroids and $R_{10}$.
We show every simple binary matroid has a unique minimal tropical basis.
Since the regular matroid is binary, we positively answered the question.
\end{abstract}

\section{Introduction} 
Matroids were introduced by Whitney in 1935 to try to
capture abstractly the essence of dependence. A {\it matroid} is an ordered pair $(E,{\mathscr C})$ consisting of a finite set $E$ and a collection ${\mathscr C}$ of subset of $E$ having three properties:

\begin{enumerate}
 \setlength{\parskip}{0cm} 
  \setlength{\itemsep}{0cm} 
\renewcommand{\labelenumi}

  \item {\bf (1)} $\varnothing \notin {\mathscr C}$;
  \item {\bf (2)} if $C_{1}$ and $C_{2}$ are members of ${\mathscr C}$ and $C_{1} \subset C_{2}$, then $C_{1} =C_{2}$;
  \item {\bf (3)} if $C_{1}$ and $C_{2}$ are distinct members of ${\mathscr C}$ and $e \in C_{1}\cap C_{2}$,\\ \hspace{0,5cm} then there is a member $C_{3}$ of ${\mathscr C}$ such that $C_{3} \subset (C_{1}\cup C_{2})\,\backslash\,\{e\}$.
\end{enumerate}
${\mathscr C}$ is called the $circuit\ set$ and a member of ${\mathscr C}$ is called a $circuit$ of $M$. Two matroids $M=(E,{\mathscr C})$ and $M'=(E',{\mathscr C'})$ are isomorphic, in which case we write $M \cong M',$ if there is a bijection $\varphi : E \to E'$ such that for every subset $X\subset E$, $X$ is a circuit of $M$ if and only if $\varphi (X)$ is a circuit of $M'$. A matroid $M=(E,{\mathscr C})$ is {\it simple} if the cardinality of  every circuit of $M$ is greater than 2.\par
Here are some examples of matroids. Let $G$ be a finite graph,  $E$ be the edge set of  $G$, and ${\mathscr C}$ be the family of edge sets of  cycles in $G$. Then $(E,{\mathscr C})$ obeys the axioms for matroid. Any matroid that can be obtained in this way is called  a {\it cycle} matroid. If graph $G$ is simple, then the cycle matroid of $G$ is simple.
A matroid $M$ is said to be $graphic$ if there is a cycle matroid $M'$ such that  $M \cong M'$.
Let $K$ be a field, $V$ be a vector space over $K$,
$E$ be a finite subset of $V$, and ${\mathscr C}$ be the family of minimal linearly dependent subset of $E$.
Then $(E,\,{\mathscr C})$ obeys the axioms for matroid, that is a matroid.
Any matroid that can be obtained in this way is called a {\it linear } matroid over $K$.
A matroid $M$ is said to be {\it representable} over $K$ if there is a linear matroid $M'$ over $K$  such that $M \cong M'$. A matroid is called {\it binary } if it is representable over binary field $\mathbb F_{2}$, and {\it regular} if it is representable over every field.\par
The Bergman fan of a matroid is defined to be the intersection of tropical hyperplanes of tropical linear form of circuits. The Bergman of a matroid fan is a kind of tropical linear space, used as a local model of tropical manifold.
A subset of the circuit set $B$ is called a {\it tropical basis} if the intersection of tropical hyperplanes of tropical linear form of circuits in $B$ is equal to the Bergman fan.
A tropical basis is said to be {\it minimal} if it is minimal with respect to the inclusion relation.
For an ideal $I$ of a polynomial ring over a field, if $\{f_{1},\,f_{2},\,...,\,f_{m}\}$ is a generator of $I$, then $V(\{f_{1},\,f_{2},\,...,\,f_{m}\})$ is equal to $V(I)$. In the sense that the zero set of a tropical basis is equal to the Bergman fan of a matroid, a minimal tropical basis is an analogy to minimal generators of ideal. We study minimal tropical basis. In \cite{key7} Yu and Yuster showed that every graphic, cographic simple matroid and $R_{10}$ have a unique minimal tropical basis. The Seymour decomposition theorem states that every  regular matroid can be decomposed into those matroids by repeated 1-, 2-, and 3-sum decompositions. Yu and Yuster  posed a question whether every simple regular matroid has a unique minimal tropical basis. We proved that every simple binary matroid has a unique minimal tropical basis. Since the regular matroid is binary, we  positively answered the question. \par
In Section 2, we recall some basic facts for matroids and Bergman fan of matroid. Then in Section 3  we gave a necessary and sufficient  condition  for a matroid to have a unique minimal tropical basis.  We prove thet a simple uniform matroid is regular if and only if it has a unique minimal tropical basis.  For the Fano matroid and the so-called non-Fano matroid, we describe these minimal tropical bases. In Section 4 we show  our main result that if a simple matroid is binary, then it has a unique minimal tropical basis. Finally 
by using Yu and Yuster's Lemma, we give an algorithm discriminating whether a matroid has a unique minimal tropical basis. We give examples of matroids. $P_{7}$ and $R_{6}$ are two matroids. It is known that for a filed $K$, $P_{7}$ and $R_{6}$ are representable over $K$ if and only if the cardinality of $K$ is more than two.
By that algorithm, we know that $P_{7}$ has a unique minimal tropical basis but $R_{6}$ does not.
Therefore, for a filed $K$, every simple matroid that is representable over $K$ has a unique minimal tropical basis if and only if $K$ is binary filed.  
\section{Preliminaries}

We briefly recall the theory of matroid and Bergman fan of matroid. We refer the reader to \cite{key1,key2,key3} for
details and further references.
Let $M=(E,\,{\mathscr C})$ be a matroid and $T$ be a subset of $E$. We define ${\mathscr C_{M/T}}$ as the set that  consists of minimal nonempty elements of $\{C\,\backslash \,T\ |\ C\in {\mathscr C}\}$. $(E\,\backslash \,T,\, {\mathscr C_{M/T}})$ is a matroid. We call this matroid the $contraction$ of $T$ from $M$ and write it as $M/T$.
Let ${\mathscr C_{M\backslash T}}$ be the set $\{C\subset E\,\backslash \,T\ |\ C\in {\mathscr C}\}$. $(E\,\backslash
\,T, {\mathscr C_{M\backslash T}})$ is a matroid.
We call this matroid the $deletion$ of $T$ from $M$ and write it as $M\,\backslash \,T$. A {\it minor} of $M$ is any matroid that can be obtained from $M$ by a sequence of deletions or contractions. For $n$ and $d$ be natural numbers and $n\geq d$, a {\it uniform} matroid $U_{d,n}$ is defined as follows. The ground set is $[n] := \{1,2,...,n\}$. The circuit set of $U_{d,n}$ consists of every $(d+1)$-subsets of $[n]$. A uniform matroid $U_{d,n}$ is simple if and only if $d$ is more than one. A matroid is called {\it ternary} if it is representable over $\mathbb F_{3}$. 
A matroid $M$ is said to be {\it cographic} if there is a graphic matroid $M'$ such that the dual matroid of $M'$ is isomorphic to $M$. 
$R_{10}=(E,\,{\mathscr C})$ is a regular matroid. The cardinality of $E$ is ten. $R_{10}$ is neither graphic nor cographic.

There are characterizations for a matroid to be binary or regular.

\begin{theorem}[\cite{key3}]
Let $M$ be a matroid. The\ following\   statements\   are\   equivalent.
\begin{enumerate}
 \setlength{\parskip}{0cm} 
  \setlength{\itemsep}{0cm} 

\renewcommand{\labelenumi}

   \item {\bf (1)} $M$ is binary.
   \item {\bf (2)} $M$ has no minor isomorphic to $U_{2,4}.$
   \item {\bf (3)} If $C_{1}$ and $C_{2}$ are circuits, then their
symmetric difference $C_{1}\bigtriangleup C_{2}$ is a disjoint union of circuits.

\end{enumerate}
\end{theorem}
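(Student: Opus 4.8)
The plan is to prove the cycle of implications $(1)\Rightarrow(3)\Rightarrow(2)\Rightarrow(1)$, of which the first two are elementary and the third carries all the content. For $(1)\Rightarrow(3)$, I would fix an $\mathbb{F}_2$-representation of $M$ by a matrix whose columns $\{a_e\}_{e\in E}$ realize the circuits as the minimal supports of linear dependencies, so that a set $C$ is a circuit exactly when $\sum_{e\in C}a_e=0$ with no proper nonempty subset having vanishing column-sum. Given two circuits $C_1,C_2$, the elements of $C_1\cap C_2$ contribute twice and hence cancel over $\mathbb{F}_2$, so $\sum_{e\in C_1\bigtriangleup C_2}a_e=\sum_{e\in C_1}a_e+\sum_{e\in C_2}a_e=0$. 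Thus $C_1\bigtriangleup C_2$ is a dependent set whose column-sum vanishes; peeling off one circuit at a time from it, each removal preserving the vanishing column-sum because the removed circuit is contained in the current set, exhibits $C_1\bigtriangleup C_2$ as a disjoint union of circuits.

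For $(3)\Rightarrow(2)$ I would argue by contraposition, showing first that $U_{2,4}$ itself violates $(3)$ and then that property $(3)$ passes to minors. The circuits of $U_{2,4}$ are the four $3$-subsets of $\{1,2,3,4\}$; taking $C_1=\{1,2,3\}$ and $C_2=\{1,2,4\}$ gives $C_1\bigtriangleup C_2=\{3,4\}$, a nonempty set too small to contain any circuit, hence not a disjoint union of circuits. For minor-closure I would treat deletion and contraction separately: the circuits of $M\setminus e$ are precisely the circuits of $M$ avoiding $e$, so a symmetric difference inside $M\setminus e$ decomposes via circuits that also avoid $e$; for $M/e$ one uses $(C_1\setminus e)\bigtriangleup(C_2\setminus e)=(C_1\bigtriangleup C_2)\setminus e$ to transfer a decomposition of $C_1\bigtriangleup C_2$ in $M$ down to $M/e$. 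A matroid with a $U_{2,4}$ minor would then inherit a violation of $(3)$, so $(3)$ forces the absence of such a minor.

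The main obstacle is $(2)\Rightarrow(1)$, which is Tutte's excluded-minor characterization of binary matroids and the only genuinely hard implication. Here I would try to construct an $\mathbb{F}_2$-representation inductively, adjoining the elements of $E$ one at a time to a representation of a suitable smaller minor and using the absence of a $U_{2,4}$ minor to pin the new column down uniquely over $\mathbb{F}_2$; the delicate point is controlling how partial representations extend across the $3$-connected pieces and ruling out incompatible extensions, which is exactly where the excluded minor does its work through a connectivity reduction, as in Tutte's homotopy theorem or Gerards' shorter argument. Because the statement is classical I would in practice simply invoke \cite{key3}, but the inductive representation-building route is the one I would pursue to establish it from scratch.
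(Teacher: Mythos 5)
The paper does not prove this theorem at all: it is quoted from Oxley \cite{key3}, and the implication $(2)\Rightarrow(1)$ is Tutte's excluded-minor characterization of binary matroids. Your proposal is therefore not comparable to a "paper proof" so much as an attempt to supply one. What you write for $(1)\Rightarrow(3)$ is the standard and correct argument: the characteristic vectors of circuits sum to zero over $\mathbb{F}_2$, the symmetric difference inherits a vanishing column-sum, and peeling off one circuit at a time (each removal preserving the vanishing sum, each nonempty remainder being dependent) terminates in a partition into circuits. Your $U_{2,4}$ computation is also correct. Since you explicitly defer $(2)\Rightarrow(1)$ to the literature, your treatment is consistent with the paper's own practice of citing the result, but it is not a self-contained proof of the equivalence; the one genuinely hard implication remains unproved.

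There is also a concrete gap in the one nontrivial step you do claim to carry out, namely the closure of property $(3)$ under contraction inside $(3)\Rightarrow(2)$. The identity $(C_1\setminus T)\bigtriangleup(C_2\setminus T)=(C_1\bigtriangleup C_2)\setminus T$ is fine, and if $C_1\bigtriangleup C_2=\coprod_j F_j$ with each $F_j$ a circuit of $M$, then $(C_1\bigtriangleup C_2)\setminus T=\coprod_j (F_j\setminus T)$. But each nonempty $F_j\setminus T$ is only guaranteed to \emph{contain} a circuit of $M/T$ (circuits of $M/T$ being the minimal nonempty sets of the form $C\setminus T$); it need not itself be a disjoint union of circuits of $M/T$, so the decomposition does not transfer as stated. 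The usual repairs are to prove the stronger, equivalent form of $(3)$ that the symmetric difference of \emph{any} family of circuits is a disjoint union of circuits and then induct on single-element contractions, or to route minor-closure through condition $(1)$ (representability is trivially minor-closed) or through the even circuit--cocircuit intersection criterion. As written, the contraction case of your minor-closure argument would fail.
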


\begin{theorem}[\cite{key2}\cite{key6}]
Let $M$ be a matroid. The following statements are equivalent.
\begin{enumerate}
 \setlength{\parskip}{0cm} 
  \setlength{\itemsep}{0cm} 

\renewcommand{\labelenumi}

   \item {\bf (1)} $M$ is regular.
   \item {\bf (2)} $M$ is binary and ternary.
   \item {\bf (3)} $M$ can be decomposed into graphic and cographic matroids
and matroids isomorphic to $R_{10}$ by repeated 1-, 2-, and 3-sum decompositions.

\end{enumerate}
\end{theorem}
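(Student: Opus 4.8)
The plan is to treat the two nontrivial equivalences separately, since they rest on different classical results: $(1)\Leftrightarrow(2)$ follows from Tutte's excluded-minor characterization of regular matroids, whereas $(1)\Leftrightarrow(3)$ is Seymour's regular-matroid decomposition theorem. I would first dispose of the two easy implications and then reduce the remaining directions to these cited results, reproving only the elementary parts.

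For $(1)\Rightarrow(2)$ I would argue directly from the definition: a regular matroid is representable over every field, in particular over $\mathbb{F}_2$ and $\mathbb{F}_3$, so it is at once binary and ternary. For the converse $(2)\Rightarrow(1)$ I would invoke Tutte's theorem that a matroid is regular if and only if it has no minor isomorphic to $U_{2,4}$, $F_7$, or $F_7^\ast$, together with the excluded-minor descriptions of the binary and ternary classes. By Theorem 1(2) a binary matroid has no $U_{2,4}$ minor; since both $U_{2,5}$ and $U_{3,5}$ contain $U_{2,4}$ as a minor (delete one element of $U_{2,5}$, contract one element of $U_{3,5}$), a binary matroid also excludes these two. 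The ternary excluded minors are exactly $U_{2,5},\,U_{3,5},\,F_7,\,F_7^\ast$, so a matroid that is both binary and ternary excludes all of $U_{2,4},\,F_7,\,F_7^\ast$ and is therefore regular by Tutte's criterion.

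For $(3)\Rightarrow(1)$ I would proceed by induction along the decomposition. The base cases are regular: graphic matroids are represented by totally unimodular incidence matrices and hence are regular, cographic matroids are duals of graphic matroids and regularity is preserved under duality, and $R_{10}$ is regular as recalled in the preliminaries. The inductive step is to verify that the $1$-, $2$-, and $3$-sum operations preserve regularity, which is the constructive half of Seymour's work; I would cite \cite{key6} for it rather than recompute the representing matrices.

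The deep and only genuinely hard direction is $(1)\Rightarrow(3)$, namely Seymour's theorem that every regular matroid can be assembled from graphic, cographic, and $R_{10}$ pieces by repeated $1$-, $2$-, and $3$-sums. I would not attempt to reprove this; it is the main obstacle, and its difficulty is precisely why the decomposition is stated and cited here rather than derived. Accordingly the honest form of the argument is to establish the elementary implications above and defer $(1)\Rightarrow(3)$ to \cite{key6}, with $(1)\Leftrightarrow(2)$ drawing on \cite{key2}.
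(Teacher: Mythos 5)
The paper states this as a cited background theorem (attributed to \cite{key2} and \cite{key6}) and supplies no proof of its own, so there is nothing to diverge from: your outline correctly reduces the two nontrivial directions to Tutte's excluded-minor characterization of regular matroids and to Seymour's decomposition theorem, which is precisely the standard route taken in those references. The elementary steps you do carry out --- $(1)\Rightarrow(2)$ from the definition, the excluded-minor bookkeeping with $U_{2,4}$, $U_{2,5}$, $U_{3,5}$, $F_{7}$, $F_{7}^{\ast}$ for $(2)\Rightarrow(1)$, and the induction for $(3)\Rightarrow(1)$ --- are all correct, so the proposal is sound and consistent with the paper's treatment.
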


Let ${\bf T}$ = (${\bf R}$\,$\cup \,\{-\infty\}$,\,$\oplus$,\,$\odot$)  be the tropical semifield where $\oplus$ is maximum operation and $\odot$ is the usual addition. ${\bf TP}^{n}$ is the {\it tropical projective space}.
The {\it Bergman fan} or {\it constant coefficient tropical linear space} of a matroid can be described in terms of their  matroid.
Let $M=([n],\,{\mathscr C})$ be a matroid. For a circuit $C \in {\mathscr C}$, let $V(C)$ be the set of points $x\in$ ${\bf TP}^{n-1}$  such that the maximum value in $\{ x_{i}\ |
\ i \in C\} $ is attained at least twice. The set $V({\mathscr C}):= \bigcap_{C \in {\mathscr C}} V(C)$ is a polyhedral fan called the  {\it Bergman fan } or {\it constant\ coefficient\ tropical linear space} of $M$.
Federico Ardila and Caroline Klivans had shown that the Bergman fan of a matroid $M$ centered at the origin is geometric realization of the order complex of the lattice of flats of $M$.
The simplication of $M$ does not change lattice of flats, so we may assume that our matroid is simple.

\section{Tropical basis}
Let $M=([n],\,{\mathscr C})$ be a matroid.
A subset $B$ of ${\mathscr C}$ is defined to be a {\it tropical basis}\, if $V(B):=\bigcap_{C\in B} V(C)$ is equal to $V({\mathscr C})$. A tropical  basis is $minimal$ if every its proper subset is not a tropical basis. We study minimal tropical basis and whether a matroid has a unique minimal tropical basis.\par
If two circuit $C_{1} , C_{2}$ have a unique element in $C_{1} \cap C_{2}$,  $pasting$ them means taking 
their symmetric difference $C_{1}\bigtriangleup C_{2} = (C_{1} \backslash C_{2})\cup (C_{2} \backslash C_{1})$.
\begin{prop}[\cite{key7}] 
If $S$ $\subset$ ${\mathscr C}$ has the property that every other circuit of the matroid can  be obtained by successively 
pasting circuits in S, then S is a tropical basis.
\end{prop}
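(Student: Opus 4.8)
The plan is to reduce the statement to a single \emph{pasting lemma} and then propagate it along the successive-pasting process by induction. First observe that since $S \subset {\mathscr C}$, the intersection $V(S) = \bigcap_{C \in S} V(C)$ automatically contains $V({\mathscr C}) = \bigcap_{C \in {\mathscr C}} V(C)$; so the entire content is the reverse inclusion $V(S) \subset V({\mathscr C})$, which amounts to proving $V(S) \subset V(C)$ for every circuit $C \in {\mathscr C}$.

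The core step I would isolate is the claim that if $C_{1}$ and $C_{2}$ are circuits with $C_{1} \cap C_{2} = \{e\}$ and $C = C_{1} \bigtriangleup C_{2}$ is their paste, then $V(C_{1}) \cap V(C_{2}) \subset V(C)$. Granting this, the proposition follows by induction on the number of pasting operations used to build a given circuit $C$ from members of $S$: the base case $C \in S$ is immediate, and in the inductive step $C = C_{1} \bigtriangleup C_{2}$ with $C_{1}, C_{2}$ each obtainable from $S$, the inductive hypothesis gives $V(S) \subset V(C_{1})$ and $V(S) \subset V(C_{2})$, so the claim yields $V(S) \subset V(C_{1}) \cap V(C_{2}) \subset V(C)$. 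Intersecting over all circuits then gives $V(S) \subset V({\mathscr C})$, as desired.

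To prove the pasting claim I would fix a point $x \in V(C_{1}) \cap V(C_{2})$ and set $a_{j} = \max\{x_{i} \mid i \in C_{j} \setminus \{e\}\}$ for $j = 1,2$. The hypothesis $x \in V(C_{j})$ forces $a_{j} \geq x_{e}$ and $a_{j} = \max\{x_{i} \mid i \in C_{j}\}$ in each case, since if the maximum over $C_{j}$ were attained only at $e$ it could not be attained twice. The goal is that $m := \max(a_{1}, a_{2}) = \max\{x_{i} \mid i \in C\}$ is attained at least twice on $C = (C_{1} \setminus \{e\}) \cup (C_{2} \setminus \{e\})$. I would split into cases. If one of $a_{1}, a_{2}$ strictly dominates, say $a_{1} > a_{2}$, then $a_{1} > a_{2} \geq x_{e}$, so the maximum over $C_{1}$ is already attained at least twice inside $C_{1} \setminus \{e\} \subset C$. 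If $a_{1} = a_{2} = m$ with $x_{e} < m$, the same double attainment holds within one side. The remaining case $a_{1} = a_{2} = x_{e} = m$ is where the real work lies: here each circuit attains $m$ at least once away from $e$, and since $C_{1} \setminus \{e\}$ and $C_{2} \setminus \{e\}$ are disjoint (because $C_{1} \cap C_{2} = \{e\}$), these two witnesses are distinct elements of $C$, giving the required double attainment.

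The main obstacle is precisely this last case: I expect the argument to hinge entirely on the disjointness of $C_{1} \setminus \{e\}$ and $C_{2} \setminus \{e\}$, which is exactly what the single-element-overlap hypothesis guarantees and what makes the symmetric difference behave like a tropical linear combination of the two forms. Everything else reduces to bookkeeping on where the coordinatewise maxima are attained relative to $x_{e}$.
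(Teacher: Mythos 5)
Your proof is correct: the reduction to the single-paste containment $V(C_{1})\cap V(C_{2})\subset V(C_{1}\bigtriangleup C_{2})$, the induction over successive pastings, and the case analysis on where the maxima of $x$ over $C_{1}\setminus\{e\}$ and $C_{2}\setminus\{e\}$ sit relative to $x_{e}$ (with the disjointness of these two sets supplying the two witnesses in the equality case) all go through. Note that the paper itself supplies no proof of this proposition --- it is quoted from Yu and Yuster \cite{key7} --- and your argument is essentially the standard one from that source, namely that pasting corresponds to tropically combining the two linear forms so as to cancel the variable $x_{e}$.
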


\subsection{Intersection of all tropical bases}
We denote the intersection of all tropical bases of $M$ by $B_{M}$. We show that $M$ has a unique minimal tropical basis if and only if $B_{M}$ is a tropical basis.

\begin{lemma}
Let $C$ be a circuit.
The\ following\   statements\   are\   equivalent.\begin{enumerate}
\setlength{\parskip}{0cm} 
  \setlength{\itemsep}{0cm} 

\renewcommand{\labelenumi}

   \item {\bf(1)} The circuit  $C\ is\ in\ B_{M}.$
   \item {\bf(2)} ${\mathscr C}\,\backslash \{C\}\ is\ not\ a\ tropical\ basis.$
   \item {\bf(3)} There is an element $ x \in $ ${\bf TP}^{n-1}$ such that x is not in $V(C)$ and for every other circuit $C',\, x$ is in $V(C')$.

\end{enumerate}
\end{lemma}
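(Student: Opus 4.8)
The plan is to exploit the fact that the operator $V$ is inclusion-reversing: if $A \subseteq A' \subseteq {\mathscr C}$ then $V(A') \subseteq V(A)$, since intersecting over more circuits can only shrink the zero set. In particular $V({\mathscr C}) \subseteq V(A)$ for every $A \subseteq {\mathscr C}$, and a subset $A$ is a tropical basis precisely when this inclusion is an equality. Note also that ${\mathscr C}$ itself is trivially a tropical basis, so the family of tropical bases is nonempty and $B_{M}$ is well defined as a subset of ${\mathscr C}$.

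I would first prove the equivalence (2) $\Leftrightarrow$ (3), which is essentially an unwinding of the definitions. Since ${\mathscr C}\setminus\{C\} \subseteq {\mathscr C}$ we always have $V({\mathscr C}) \subseteq V({\mathscr C}\setminus\{C\})$, and moreover $V({\mathscr C}) = V({\mathscr C}\setminus\{C\}) \cap V(C)$. Hence ${\mathscr C}\setminus\{C\}$ fails to be a tropical basis exactly when this inclusion is strict, that is, when there exists a point $x \in V({\mathscr C}\setminus\{C\})$ with $x \notin V({\mathscr C})$. For such an $x$ we have $x \in V(C')$ for every circuit $C' \neq C$ while $x \notin V(C)$, which is precisely statement (3); conversely, any $x$ as in (3) lies in $V({\mathscr C}\setminus\{C\})\setminus V(C) \subseteq V({\mathscr C}\setminus\{C\})\setminus V({\mathscr C})$, witnessing the strict inclusion.

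Next I would prove (1) $\Leftrightarrow$ (2). For the direction $\neg(2)\Rightarrow\neg(1)$: if ${\mathscr C}\setminus\{C\}$ is a tropical basis, then it is a tropical basis not containing $C$, so $C$ cannot belong to the intersection $B_{M}$ of all tropical bases. For (2) $\Rightarrow$ (1): let $B$ be any tropical basis and suppose $C \notin B$, so that $B \subseteq {\mathscr C}\setminus\{C\}$. The inclusion-reversing property then gives the chain $V({\mathscr C}) \subseteq V({\mathscr C}\setminus\{C\}) \subseteq V(B)$, and since $B$ is a tropical basis $V(B) = V({\mathscr C})$, forcing $V({\mathscr C}\setminus\{C\}) = V({\mathscr C})$. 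But this says ${\mathscr C}\setminus\{C\}$ is a tropical basis, contradicting (2). Hence every tropical basis contains $C$, so $C \in B_{M}$.

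The argument is almost entirely formal, so I do not expect a serious obstacle; the only point requiring care is the squeezing step in (2) $\Rightarrow$ (1), where one must correctly track the direction of the inclusions — a smaller index set yields a larger zero set — in order to conclude that a tropical basis avoiding $C$ would force ${\mathscr C}\setminus\{C\}$ itself to be a tropical basis.
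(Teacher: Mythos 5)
Your proof is correct and follows essentially the same elementary unwinding of definitions as the paper; the only cosmetic difference is that you close the cycle via (2) $\Rightarrow$ (1) using the inclusion-reversing property of $V$, whereas the paper goes (3) $\Rightarrow$ (1) by noting that the witness point $x$ lies in $V(B)$ for any $B$ omitting $C$. Both arguments establish the same key fact, namely that any subset of ${\mathscr C}$ not containing $C$ fails to cut out $V({\mathscr C})$, so no substantive gap exists.
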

\begin{proof}By the definition of tropical basis, it is clear that 
{\bf (2)} is equivalent to {\bf (3)}. We show {\bf (1)} implies {\bf (2)}.
If $C$ is in $B_{M}$, then every tropical basis contains a circuit $C$. Therefore, ${\mathscr C}\,\backslash $$\{C\}$ is not a tropical basis.
We prove {\bf (3)} implies {\bf (1)}. Let $x$ be a point in ${\bf TP}^{n-1}$ that is not in $V(C)$ but for every other circuit $C'$, $V(C')$ contains $x$. If a subset of the circuit set $B$ does not contain $C$, $x$ is not in $V({\mathscr C})$,   but in $V(B)$. $B$ is not a tropical basis. All tropical bases contain a circuit $C$. Therefore, $C$ is in $B_{M}$.
\end{proof}
\begin{lemma}
The following statements are equivalent.
\begin{enumerate}
\setlength{\parskip}{0cm} 
  \setlength{\itemsep}{0cm} 

\renewcommand{\labelenumi}

   \item {\bf(1)} $M$ has a unique minimal tropical basis.
   \item {\bf(2)} $B_{M}$ is a tropical basis.
\end{enumerate}
\end{lemma}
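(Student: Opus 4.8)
The plan is to prove the two implications separately, relying only on the definition of $B_{M}$ as the intersection of all tropical bases together with the finiteness of the circuit set ${\mathscr C}$, which guarantees that minimal tropical bases exist. Note that ${\mathscr C}$ itself is always a tropical basis, so the family of tropical bases is nonempty.

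For the implication $(2)\Rightarrow(1)$, suppose $B_{M}$ is a tropical basis. By its very definition $B_{M}$ is contained in every tropical basis, so in particular $B_{M}$ is contained in every minimal tropical basis. Now let $B'$ be any minimal tropical basis. Then $B_{M}\subseteq B'$, and since $B_{M}$ is itself a tropical basis, the minimality of $B'$ forces $B'=B_{M}$. Hence $B_{M}$ is the unique minimal tropical basis, which is $(1)$.

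For the converse $(1)\Rightarrow(2)$, suppose $M$ has a unique minimal tropical basis, say $B_{0}$. The first step is to observe that every tropical basis $B$ contains a minimal tropical basis: the family of tropical bases contained in $B$ is nonempty (it contains $B$) and finite (because ${\mathscr C}$ is finite), so it has a member that is minimal with respect to inclusion, and such a member is by definition a minimal tropical basis. Since $B_{0}$ is the only minimal tropical basis, every tropical basis must contain $B_{0}$. Therefore $B_{0}$ is contained in the intersection $B_{M}$ of all tropical bases. On the other hand, $B_{0}$ is itself a tropical basis, so $B_{M}\subseteq B_{0}$ by the definition of $B_{M}$. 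Combining the two inclusions gives $B_{M}=B_{0}$, and in particular $B_{M}$ is a tropical basis, which is $(2)$.

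The argument is essentially formal, so I do not expect a genuine obstacle; the one point that requires care is the existence-of-a-minimal-element step in $(1)\Rightarrow(2)$, where finiteness of ${\mathscr C}$ is used to pass from an arbitrary tropical basis down to a minimal one and thereby conclude that the unique minimal basis is contained in all of them. Lemma 1 is not strictly needed here, though it offers an alternative circuit-by-circuit description of $B_{M}$ that is useful for verifying $(2)$ in concrete examples.
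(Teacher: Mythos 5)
Your proof is correct and follows essentially the same route as the paper: the direction $(1)\Rightarrow(2)$ uses the identical finiteness argument to show every tropical basis contains the unique minimal one, and your direct argument for $(2)\Rightarrow(1)$ is just the contrapositive form of the paper's (which exhibits two distinct minimal bases $B_{1},B_{2}$ and notes $B_{M}\subseteq B_{1}\cap B_{2}\subsetneq B_{1}$). Incidentally, the paper's proof has its two implication labels swapped relative to the arguments actually given; your version states them correctly.
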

\begin{proof}
We first prove {\bf (2)} implies {\bf (1)}. Let $B$ be the unique minimal tropical basis of $M$. By the uniqueness of minimal tropical basis and  finiteness of the cardinality of the circuit set, every tropical basis of $M$ contains $B$. Therefore, $B_{M} = B$ is a tropical basis.
We now show {\bf(1)} implies {\bf (2)}.  We show the contraposition. Let $B_{1}$ and $B_{2}$ be distinct minimal tropical bases of $M$. Then we have $B_{M} \subset B_{1}\cap B_{2} \subsetneq B_{1}$. By the minimality of $B_{1}$, $B_{M}$ is not a tropical basis of $M$.
\end{proof}
\subsection{Uniform matroid}
Yu and Yuster\cite{key7} found minimal tropical bases of uniform matroids. In this section we show that for a simple uniform matroid, it is regular if and only if it has a unique minimal tropical basis. 
\begin{prop}[\cite{key7}]
For every $i \in [n]$, $B_{i} := \{ C\ |\ i \in C\}$ is a minimal tropical basis of uniform matroid $U_{d,n}$.

\end{prop}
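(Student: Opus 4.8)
The plan is to verify the two defining properties of a minimal tropical basis in turn: first that $B_{i}$ is a tropical basis, i.e. $V(B_{i})=V({\mathscr C})$, and then that no proper subset of $B_{i}$ is a tropical basis. Throughout I use that the circuits of $U_{d,n}$ are exactly the $(d+1)$-subsets of $[n]$, so $B_{i}$ consists of all $(d+1)$-subsets containing $i$. Note that the pasting criterion of Proposition 1 is of no direct help here: any two circuits in $B_{i}$ share the element $i$, so if they meet in a single element their symmetric difference has $2d$ elements, which is not a circuit once $d\geq 2$. Hence I would argue directly on the ``maximum attained twice'' condition.

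Since $B_{i}\subset{\mathscr C}$, the inclusion $V({\mathscr C})\subset V(B_{i})$ is automatic, so the content is $V(B_{i})\subset V({\mathscr C})$. Take $x\in V(B_{i})$ and an arbitrary circuit $C'=\{a_{1},\dots,a_{d+1}\}$ with $i\notin C'$; I must show the maximum of $x$ over $C'$ is attained at least twice. Arguing by contradiction, suppose it is attained only at $a_{1}$, and test $x$ against the circuits $C_{j}:=(C'\setminus\{a_{j}\})\cup\{i\}\in B_{i}$ obtained by swapping $a_{j}$ for $i$. Testing $C_{1}=\{a_{2},\dots,a_{d+1},i\}$ rules out $x_{i}>x_{a_{1}}$, since otherwise the maximum over $C_{1}$ is $x_{i}$, attained only at $i$. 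Then testing $C_{2}=\{a_{1},a_{3},\dots,a_{d+1},i\}$, whose maximum is forced to equal $x_{a_{1}}$, shows that $x\in V(C_{2})$ can hold only if $x_{i}=x_{a_{1}}$. But then the maximum over $C_{1}$ equals $x_{i}=x_{a_{1}}$ and is attained only at $i$, contradicting $x\in V(C_{1})$. This case analysis is the technical heart of the statement: the point is that the circuits through $i$ that ``cover'' $C'$ jointly force the two-fold maximum on $C'$ itself.

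For minimality I would show that for every $C\in B_{i}$ the set $B_{i}\setminus\{C\}$ fails to be a tropical basis, by exhibiting a point $x$ lying in $V(C')$ for all $C'\in B_{i}\setminus\{C\}$ but not in $V(C)$, so that $x\in V(B_{i}\setminus\{C\})\setminus V({\mathscr C})$. Writing $C=\{i,b_{1},\dots,b_{d}\}$, I set $x_{i}=0$, put $x_{b_{j}}=-1$ for $1\leq j\leq d$, and $x_{e}=0$ for every $e\in[n]\setminus C$. Over $C$ the maximum $0$ is attained only at $i$, so $x\notin V(C)$. On the other hand, any circuit $C'\ni i$ with $C'\neq C$ satisfies $C'\setminus\{i\}\neq C\setminus\{i\}$, so being distinct $d$-sets it must contain some $e\in[n]\setminus C$; then the maximum $0$ is attained at both $i$ and $e$, so $x\in V(C')$. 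Hence $B_{i}\setminus\{C\}$ is not a tropical basis, and as $C\in B_{i}$ was arbitrary, $B_{i}$ is minimal. (When $n=d+1$ there is a single circuit $[n]$ and the statement is immediate.)

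I expect the explicit construction in the minimality step to be routine. The genuine obstacle is the first inclusion, where one must select the right circuits $C_{j}$ through $i$ and chase the equality and strict-inequality cases for $x_{i}$ against the maximum on $C'$, using only that each $C_{j}$ satisfies the two-fold-maximum condition and nothing more.
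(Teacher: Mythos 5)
Your proof is correct. Note that the paper itself gives no argument for this proposition --- it is quoted from Yu and Yuster \cite{key7} --- so there is no in-paper proof to compare against; your write-up supplies a complete, self-contained justification. Your opening observation is also right and worth keeping: the pasting criterion of Proposition 1 cannot produce $B_{i}$ here, since two circuits of $U_{d,n}$ meeting in a single element have symmetric difference of size $2d\neq d+1$ for $d\geq 2$, so a direct analysis of the ``maximum attained twice'' condition is genuinely needed. Both halves check out: in the basis part, the three cases $x_{i}>x_{a_{1}}$, $x_{i}<x_{a_{1}}$, $x_{i}=x_{a_{1}}$ are each killed by one of the swapped circuits $C_{1},C_{2}$, which is exactly the right mechanism; in the minimality part, the weight vector $x$ ($0$ on $i$ and on $[n]\setminus C$, $-1$ on $C\setminus\{i\}$) lies in $V(D)$ for every $D\in B_{i}\setminus\{C\}$ because $D\setminus\{i\}$ and $C\setminus\{i\}$ are distinct $d$-sets, hence $D$ meets $[n]\setminus C$, while $x\notin V(C)$. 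Since every proper subset of $B_{i}$ sits inside some $B_{i}\setminus\{C\}$ and $V$ is inclusion-reversing, this does establish minimality, as you indicate.
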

\begin{prop}
The following statements are equivalent. \begin{enumerate}
\setlength{\parskip}{0cm} 
  \setlength{\itemsep}{0cm} 

\renewcommand{\labelenumi}

\item {\bf(1)} A simple uniform matroid $U_{d,n}$ is regular.
   \item {\bf(2)} A simple uniform matroid $U_{d,n}$ is binary.
   \item {\bf(3)} A simple uniform matroid $U_{d,n}$ has a unique minimal tropical basis.

\end{enumerate}
\end{prop}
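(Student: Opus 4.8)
The plan is to reduce all three equivalences to a single dichotomy on the corank $n-d$: writing $d\ge 2$ for simplicity, I will show that each of \textbf{(1)}, \textbf{(2)}, \textbf{(3)} holds when $n\le d+1$ and fails when $n\ge d+2$.

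For \textbf{(1)} $\Leftrightarrow$ \textbf{(2)}, one direction is immediate: by Theorem 2 every regular matroid is binary. For the converse I would determine exactly which $U_{d,n}$ are binary using the excluded-minor criterion of Theorem 1, that $M$ is binary iff it has no minor isomorphic to $U_{2,4}$. The key observation is that every minor of a uniform matroid is uniform, with deletion sending $U_{d,n}$ to $U_{d,n-1}$ and contraction sending it to $U_{d-1,n-1}$; hence the corank $n-d$ never increases under taking minors. Since $U_{2,4}$ has corank $2$, a matroid $U_{d,n}$ with $n\le d+1$ (corank $\le 1$) has no $U_{2,4}$ minor, whereas for $n\ge d+2$ one obtains $U_{2,4}$ by contracting $d-2$ elements to reach $U_{2,\,n-d+2}$ and then deleting down to four elements. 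Thus $U_{d,n}$ is binary iff $n\le d+1$. In that range $U_{d,d}$ is the free matroid and $U_{d,d+1}$ is the cycle matroid of the $(d+1)$-cycle, both of which are graphic and hence regular; this yields \textbf{(2)} $\Rightarrow$ \textbf{(1)}.

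For \textbf{(2)} $\Leftrightarrow$ \textbf{(3)} I would use the same dichotomy together with Proposition 2, which guarantees that each $B_{i}=\{C\mid i\in C\}$ is a minimal tropical basis of $U_{d,n}$. Suppose first $n\le d+1$. If $n=d$ there are no circuits, so $\varnothing$ is the unique minimal tropical basis; if $n=d+1$ the single circuit is $[d+1]$, every $B_{i}$ equals $\{[d+1]\}$, and this is the unique minimal tropical basis. Suppose instead $n\ge d+2$. For distinct $i,j$, since $n-2\ge d$ there is a $(d+1)$-subset containing $i$ but not $j$, so that circuit lies in $B_{i}\setminus B_{j}$ and hence $B_{i}\ne B_{j}$. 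The $B_{1},\dots,B_{n}$ are therefore pairwise distinct minimal tropical bases, so $U_{d,n}$ has no unique minimal tropical basis.

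The minor computation and the circuit count are routine once the corank observation is isolated; that observation — that the single invariant $n-d$ controls binarity, regularity and uniqueness simultaneously — is the conceptual heart of the argument. The point demanding the most care is the uniqueness claim in the binary range: beyond noting that all the $B_{i}$ coincide one must check that no other subset of the circuit set is a minimal tropical basis, which is immediate here only because the circuit set has at most one member.
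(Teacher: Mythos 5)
Your proposal is correct and follows essentially the same route as the paper: the dichotomy on $n-d$ via the $U_{2,4}$ excluded-minor test, the direct treatment of the at-most-one-circuit cases, and the use of the bases $B_{i}$ from Proposition 2 to exhibit distinct minimal tropical bases when $n\ge d+2$ all match the paper's argument. The only (harmless) deviation is that you deduce regularity of $U_{d,d}$ and $U_{d,d+1}$ from their being graphic, whereas the paper observes they are ternary and invokes Theorem 2.
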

\begin{proof} It is clear that {\bf (1)} implies {\bf(2)} by definition.\\
We show {\bf (2)} implies {\bf (1)}. 
For $i \in [n]$,
we have \\$ U_{d,n} \backslash {\{i\}} \cong  \begin{cases}
    U_{d,n-1} &  \text{$d<n$}\\
    U_{d-1,n-1} &  \text{$d=n$},\\
 
  \end{cases} 
  $ \hspace{1cm}
      $    U_{d,n}/\{i\}\ \cong \begin{cases}
      U_{d-1,n-1} &  \text{$d>0$}\\
    U_{d,n-1} &  \text{$d=0$}.\\
      \end{cases} $
$\newline$ $\newline$
If $n$ is more then $d+1$, then the deletion of a $n-d+2$-elements set from $U_{d,n}$ is isomorphic to $U_{d,d+2}$. The contraction of a $d-2$-elements set form $U_{d,d+2}$ is isomorphic to $U_{2,4}$. $U_{d,n}$ has a minor isomorphic to $U_{2,4}$. By Theorem 1, $U_{d,n}$ is not binary. Therefore,
 if a uniform matroid $U_{d,n}$ is binary, then $n$ is equal to $d+1$ or $d$. It is clear that $U_{n,n},\,U_{n-1,n}$ are ternary.
By Theorem 2, $U_{n,n}$ and $U_{n-1,n}$ are regular. We next show {\bf (2)} implies {\bf (3)}.
Let $U_{d,n}$ be a simple and binary uniform matroid. Since $U_{d,\,n}$ is binary, $n$ is equal to $d$ or $d+1$.
The circuit set of $U_{n,n}$ and $U_{n-1,n}$ are 
$\varnothing$ and $\{ [n] \}$, respectively. They have a unique minimal tropical basis. We now show {\bf (3)} implies {\bf (2)}. We show the contraposition. If a uniform matroid $U_{d,n}$ is not binary, then $n$ is more than $d+1$.
By Proposition 2, $B_{1}$ and $B_{2}$ are minimal tropical bases of $U_{d,n}$. There are circuits $C_{1} , C_{2}$ such that 2 $\notin C_{1}$ and $1 \notin C_{2}$.  Since $C_{1}$ is not in $B_{2}$ and $C_{2}$ is not in $B_{1}$, $B_{1}$ is not equal to $B_{2}$.   Both are minimal tropical bases. Therefore, $U_{d,n}$ has two minimal tropical bases.
\end{proof}
\subsection{Fano and non-Fano matroid}

{\unitlength 0.1in%
\begin{picture}( 49.7000, 17.0000)(  3.3000,-18.1000)%
%
\special{pn 8}%
\special{ar 1352 1217 448 503  0.0000000  6.2831853}%
%
\special{pn 8}%
\special{pa 1340 250}%
\special{pa 2210 1730}%
\special{pa 493 1743}%
\special{pa 1340 250}%
\special{pa 2210 1730}%
\special{fp}%
%
\special{pn 8}%
\special{pa 4280 250}%
\special{pa 5150 1720}%
\special{pa 3442 1738}%
\special{pa 4280 250}%
\special{pa 5150 1720}%
\special{fp}%
%
\special{pn 8}%
\special{pa 4280 240}%
\special{pa 4280 1730}%
\special{fp}%
\special{pa 3430 1740}%
\special{pa 4730 1020}%
\special{fp}%
\special{pa 5140 1720}%
\special{pa 3830 1020}%
\special{fp}%
%
\special{pn 8}%
\special{pa 1340 260}%
\special{pa 1370 1730}%
\special{fp}%
\special{pa 500 1750}%
\special{pa 1750 960}%
\special{fp}%
\special{pa 2200 1740}%
\special{pa 950 930}%
\special{fp}%
\put(15.0000,-2.6000){\makebox(0,0)[lb]{1}}%
\put(3.3000,-16.5000){\makebox(0,0)[lb]{2}}%
\put(23.2000,-18.0000){\makebox(0,0)[lb]{3}}%
\put(7.8000,-8.5000){\makebox(0,0)[lb]{4}}%
\put(12.4000,-19.4000){\makebox(0,0)[lb]{6}}%
\put(18.8000,-8.5000){\makebox(0,0)[lb]{5}}%
\put(12.5000,-10.2000){\makebox(0,0)[lb]{7}}%
\put(44.8000,-2.4000){\makebox(0,0)[lb]{1}}%
\put(33.2000,-16.8000){\makebox(0,0)[lb]{2}}%
\put(53.0000,-17.2000){\makebox(0,0)[lb]{3}}%
\put(37.1000,-9.0000){\makebox(0,0)[lb]{4}}%
\put(49.0000,-8.8000){\makebox(0,0)[lb]{5}}%
\put(41.5000,-19.3000){\makebox(0,0)[lb]{6}}%
\put(41.7000,-10.3000){\makebox(0,0)[lb]{7}}%
%
\special{sh 1.000}%
\special{ia 1340 220 60 60  0.0000000  6.2831853}%
\special{pn 8}%
\special{ar 1340 220 60 60  0.0000000  6.2831853}%
%
\special{sh 1.000}%
\special{ia 950 920 60 65  0.0000000  6.2831853}%
\special{pn 8}%
\special{ar 950 920 60 65  0.0000000  6.2831853}%
%
\special{pn 8}%
\special{ar 510 1730 71 71  0.9783690  1.1071487}%
%
\special{sh 1.000}%
\special{ia 490 1740 64 64  0.0000000  6.2831853}%
\special{pn 8}%
\special{ar 490 1740 64 64  0.0000000  6.2831853}%
%
\special{sh 1.000}%
\special{ia 1360 1200 73 73  0.0000000  6.2831853}%
\special{pn 8}%
\special{ar 1360 1200 73 73  0.0000000  6.2831853}%
%
\special{sh 1.000}%
\special{ia 1760 960 71 71  0.0000000  6.2831853}%
\special{pn 8}%
\special{ar 1760 960 71 71  0.0000000  6.2831853}%
%
\special{sh 1.000}%
\special{ia 1370 1740 61 61  0.0000000  6.2831853}%
\special{pn 8}%
\special{ar 1370 1740 61 61  0.0000000  6.2831853}%
%
\special{sh 1.000}%
\special{ia 2190 1740 67 67  0.0000000  6.2831853}%
\special{pn 8}%
\special{ar 2190 1740 67 67  0.0000000  6.2831853}%
%
\special{sh 1.000}%
\special{ia 4280 250 60 60  0.0000000  6.2831853}%
\special{pn 8}%
\special{ar 4280 250 60 60  0.0000000  6.2831853}%
%
\special{sh 1.000}%
\special{ia 3850 1010 63 63  0.0000000  6.2831853}%
\special{pn 8}%
\special{ar 3850 1010 63 63  0.0000000  6.2831853}%
%
\special{sh 1.000}%
\special{ia 3450 1730 64 64  0.0000000  6.2831853}%
\special{pn 8}%
\special{ar 3450 1730 64 64  0.0000000  6.2831853}%
%
\special{sh 1.000}%
\special{ia 4290 1710 64 64  0.0000000  6.2831853}%
\special{pn 8}%
\special{ar 4290 1710 64 64  0.0000000  6.2831853}%
%
\special{sh 1.000}%
\special{ia 4290 1270 63 63  0.0000000  6.2831853}%
\special{pn 8}%
\special{ar 4290 1270 63 63  0.0000000  6.2831853}%
%
\special{sh 1.000}%
\special{ia 4740 1020 58 58  0.0000000  6.2831853}%
\special{pn 8}%
\special{ar 4740 1020 58 58  0.0000000  6.2831853}%
%
\special{sh 1.000}%
\special{ia 5160 1730 57 57  0.0000000  6.2831853}%
\special{pn 8}%
\special{ar 5160 1730 57 57  0.0000000  6.2831853}%
\end{picture}}
\newline \newline
Let $E$ be the set $\{1,\,2,\,...\,,\,7\}$ of points and let ${\mathscr C_{1}}$ and ${\mathscr C_{2}}$ be the collection of subset $X$ of $E$ in the left diagram and the right diagram, respectively such that $X$ is three colinear points or four points does not contain three colilnear points. 
$(E,\,{\mathscr C_{1}})$ and $(E,\,{\mathscr C_{2}})$ obey the axioms for matroid. $(E,\,{\mathscr C_{1}})$ and $(E,\,{\mathscr C_{2}})$
are called the ${\it Fano\ matroid}$ and the ${\it non}$-${\it Fano\ matroid}$, respectively. The above diagrams are called {\it geometric representations} of these matroids.
A circuit $X$ of these matroids is called a {\it line} if  the cardinality of $X$ is three.
Let $K$ be a field. The Fano matroid is representable over $K$ if and only if 
the characteristic of $K$ is two.
The non-Fano matroid is representable over $K$ if and only if the characteristic of $K$ is not two.
Therefore, the Fano matroid is not ternary but binary, and the non-Fano matroid is not binary but ternary.
Yu and Yuster\cite{key7} showed that the Fano matroid has a unique minimal tropical basis.
The unique minimal tropical basis of the Fano matroid is 7 lines.
All minimal tropical bases of the non-Fano matroid are \{6 lines, \{1,\,4,\,5,\,6\}\},\,\{6 lines, \{2,\,4,\,5,\,6\}\},\,\{6 lines, \{1,\,4,\,5,\,6\}\} and \{6 lines, \{4,\,5,\,6,\,7\}\}. 
Therefore, the Fano matroid has a unique minimal tropical basis and the non-Fano matroid does not.
\section{Binary matroid}
 Yu and Yuster have shown that every simple graphic, cographic matroid and $R_{10}$ have a unique minimal tropical basis. Concrete minimal tropical bases of these matroids are as follows.
 
 \begin{theorem}[\cite{key7}]
\hspace{5cm}
\begin{enumerate}
 \setlength{\parskip}{0cm} 
  \setlength{\itemsep}{0cm} 

\renewcommand{\labelenumi}

   \item {\bf (1)} The unique minimal tropical basis of a simple graphic matroid consists of
the induced cycles.
   \item {\bf (2)} The unique minimal tropical basis of a simple cographic matroid consists
of the edge cuts that split the graph into two 2-edge-connected
subgraphs.
   \item {\bf (3)} The unique minimal tropical basis of $R_{10}$ consists of the fifteen
4-cycles.

\end{enumerate}
\end{theorem}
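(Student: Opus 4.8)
The three parts share a common skeleton assembled from Lemma 1, Lemma 2 and Proposition 1. For each matroid I would name the claimed family $S$ of circuits and prove two things: that $S$ is a tropical basis, and that every $C\in S$ lies in $B_{M}$. The first gives $B_{M}\subseteq S$, since $B_{M}$ is the intersection of all tropical bases; the second gives $S\subseteq B_{M}$. Hence $S=B_{M}$ is itself a tropical basis, and Lemma 2 then yields that $M$ has a unique minimal tropical basis, necessarily equal to $S$. So the whole problem reduces, class by class, to a pasting argument (for the tropical-basis direction) and a witness-point construction via Lemma 1(3) (for membership in $B_{M}$).

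For (1), let $S$ be the set of induced (chordless) cycles of $G$. If a cycle $D$ has a chord $f$, then $f$ together with the two arcs of $D$ splits $D$ into shorter cycles $D_{1},D_{2}$ meeting only in $f$, and $D=D_{1}\bigtriangleup D_{2}$ is their paste; by induction on the number of edges every cycle is obtained by successively pasting induced cycles, so $S$ is a tropical basis by Proposition 1. For necessity, fix an induced cycle $C$, choose an edge $e^{*}\in C$, and define $x\in{\bf TP}^{n-1}$ by $x_{e}=1$ for $e=e^{*}$ and for every $e\notin C$, and $x_{e}=0$ for $e\in C\setminus\{e^{*}\}$. On $C$ the maximum value $1$ is attained only at $e^{*}$, so $x\notin V(C)$. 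For any other cycle $C'$, chordlessness forces $C'$ to use at least two edges outside $C$: a single outside edge $f$ would make $C'\setminus\{f\}$ an arc of $C$ whose endpoints are joined by $f$, producing a chord or, by simplicity, a parallel edge. Hence the value $1$ is attained at least twice on $C'$ and $x\in V(C')$. By Lemma 1, $C\in B_{M}$.

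For (2) I would work in the dual matroid, where the circuits are the bonds of $G$ and the claimed family consists of those bonds whose two sides are each 2-edge-connected. A bond one of whose sides fails to be 2-edge-connected decomposes, dually to the chord splitting above, as a paste of two smaller bonds, so the "irreducible" bonds form a tropical basis by Proposition 1; the necessity of each is obtained from the dual of the witness point built in (1). For (3) the argument is a finite verification on $R_{10}$: one checks directly that every six-element circuit is a successive paste of the fifteen four-element circuits (so these form a tropical basis), and for each four-element circuit exhibits the Lemma 1 witness point, after which the conclusion again follows from Lemma 2.

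The main obstacle in all three parts is the necessity direction, namely producing, for each member $C$ of the candidate basis, a single point $x$ that violates the twofold-maximum condition on $C$ while satisfying it on every other circuit simultaneously. In (1) this is controlled cleanly by chordlessness, as above; in (2) the same control must be transported through matroid duality to cuts, which is where I expect the real work to be; and in (3) one must organize the finitely many checks so that no six-element circuit secretly demands its own witness. The pasting direction, by contrast, is a routine induction in each case.
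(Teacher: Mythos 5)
The paper does not prove this theorem: it is imported from Yu and Yuster \cite{key7} as background, so there is no in-paper argument to compare yours against. Your skeleton is nevertheless exactly the machinery the paper itself assembles: Proposition 1 for the sufficiency of the candidate set $S$, Lemma 1(3) for the necessity of each of its members (which gives $S\subseteq B_{M}$, while $S$ being a tropical basis gives $B_{M}\subseteq S$), and Lemma 2 to conclude that $S=B_{M}$ is the unique minimal tropical basis. Part (1) as you write it is essentially a complete proof: the chord-splitting induction shows the induced cycles form a tropical basis, and your witness point (weight $0$ on $C\setminus\{e^{*}\}$, weight $1$ on $e^{*}$ and off $C$) works because a cycle $C'\neq C$ with at most one edge of weight $1$ would either be properly contained in $C$ or exhibit a chord or parallel edge of $C$.

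Parts (2) and (3), however, are programs rather than proofs, and (2) contains a step that does not go through as stated. If $E(A,B)$ is a bond and $e$ is a bridge of $G[A]$ separating $A$ into $A_{1}$ and $A_{2}$, the two cuts $E(A_{i},V\setminus A_{i})$ do paste to $E(A,B)$ along $\{e\}$, but neither is automatically a bond: $G[A_{j}\cup B]$ is connected only if some edge of the original bond leaves $A_{j}$, and ruling out the degenerate case in which all bond edges leave one side (which forces $e$ to be a bridge of $G$) requires invoking simplicity of the cographic matroid. Likewise ``the dual of the witness point built in (1)'' is not a construction; the witness for a bond must be built and verified directly, and that verification is precisely where the $2$-edge-connectivity of both sides enters. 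Note also that, within this paper's own logic, Theorem 3 is subsumed by Theorem 4 together with Proposition 4: graphic and cographic matroids and $R_{10}$ are regular, hence binary, so each has a unique minimal tropical basis equal to the set of circuits that are not a pasting of two circuits meeting in a single element; only the combinatorial identification of that set (induced cycles, the stated bonds, the fifteen $4$-cycles) would remain to be done.
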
\noindent
In this section we show that every simple binary matroid has a unique minimal tropical basis. Let $M=([n],\,{\mathscr C})$ be a matroid. It is clear that  if  the cardinality of ${\mathscr C}$ is zero or one, then  $M$ has a unique minimal tropical basis.
So in this section we assume that the cardinality of ${\mathscr C}$ is more than one.

 \begin{prop}
 Let M be a simple binary matroid and $C$ be a circuit of $M$. The following statements are  equivalent.
 \begin{enumerate}
  \setlength{\parskip}{0cm} 
  \setlength{\itemsep}{0cm} 

\renewcommand{\labelenumi}

   \item {\bf (1)} The circuit C is not in $B_{M}$.
   \item {\bf (2)} There are circuits $C_{1},C_{2}$ such that $C_{1},C_{2}$ have a unique element in their\newline
\hspace{0.5cm} intersection and  their
symmetric difference $C_{1}\bigtriangleup C_{2}$ is equal to C.
 
 \end{enumerate}
\end{prop}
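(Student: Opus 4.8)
My plan is to prove the two implications separately, treating $\mathbf{(2)}\Rightarrow\mathbf{(1)}$ as the routine direction and $\mathbf{(1)}\Rightarrow\mathbf{(2)}$ as the substantive one, where the binary hypothesis and Theorem 1 enter.

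For $\mathbf{(2)}\Rightarrow\mathbf{(1)}$ I would route everything through Lemma 1 and Proposition 1. Suppose $C=C_{1}\bigtriangleup C_{2}$ with $C_{1}\cap C_{2}=\{e\}$. First I note that $C_{1}\neq C$ and $C_{2}\neq C$, since $C_{1}=C$ would force $C_{2}=\varnothing$, which is not a circuit. Hence $C_{1},C_{2}\in{\mathscr C}\backslash\{C\}$, and the only circuit lying outside ${\mathscr C}\backslash\{C\}$, namely $C$ itself, is obtained by pasting $C_{1}$ and $C_{2}$. By Proposition 1 the set ${\mathscr C}\backslash\{C\}$ is then a tropical basis, so by the equivalence of $\mathbf{(1)}$ and $\mathbf{(2)}$ in Lemma 1 we conclude $C\notin B_{M}$.

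For $\mathbf{(1)}\Rightarrow\mathbf{(2)}$ the idea is to feed one carefully chosen $0$–$1$ point into Lemma 1 and then exploit the binary structure. Fix any $c_{0}\in C$, set $S=\{c_{0}\}\cup(E\backslash C)$, and let $x\in{\bf TP}^{n-1}$ be the point with $x_{i}=1$ for $i\in S$ and $x_{i}=0$ otherwise. Since $C\cap S=\{c_{0}\}$, the maximum of $x$ over $C$ is attained exactly once, so $x\notin V(C)$. Because $C\notin B_{M}$, the point $x$ cannot be a witness for statement $\mathbf{(3)}$ of Lemma 1, so there is another circuit $C'\neq C$ with $x\notin V(C')$, which for a $0$–$1$ point means exactly $|C'\cap S|=1$. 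Splitting $C'\cap S=(C'\cap\{c_{0}\})\cup(C'\backslash C)$, the possibility $c_{0}\in C'$ with $C'\subseteq C$ is excluded by incomparability of circuits (it would force $C'=C$), leaving $c_{0}\notin C'$ and $C'\backslash C=\{e\}$ for a single $e\notin C$. Writing $D=C'\cap C$, we get $C'=D\cup\{e\}$ with $|D|=|C'|-1\geq 2$ by simplicity.

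The final step computes $C\bigtriangleup C'=(C\backslash D)\cup\{e\}$ and shows that this set is a single circuit. By the binary characterization $\mathbf{(3)}$ of Theorem 1, $C\bigtriangleup C'$ is a disjoint union of circuits; the element $e$ lies in exactly one of these, while any further part would be a circuit contained in $C\backslash D\subsetneq C$, contradicting incomparability. Hence $C_{1}:=(C\backslash D)\cup\{e\}$ is itself a circuit, and together with $C_{2}:=C'$ it satisfies $C_{1}\cap C_{2}=\{e\}$ and $C_{1}\bigtriangleup C_{2}=C$, which is precisely $\mathbf{(2)}$. I expect the main obstacle to be this last step: collapsing the a priori multi-circuit decomposition guaranteed by the binary property down to a single circuit. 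This is exactly where simplicity (circuits of cardinality at least three) and the incomparability axiom are used, and it is where the argument must fail for a merely ternary matroid such as the non-Fano matroid, consistent with the computations of Section 3.
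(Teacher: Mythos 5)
Your proof is correct and follows essentially the same route as the paper: the same $0$--$1$ witness point fed into Lemma~1, the same appeal to Proposition~1 for the easy direction, and the same collapse of the binary symmetric-difference decomposition $C\bigtriangleup C'$ to a single circuit via circuit incomparability. The only difference is organizational --- you prove $(1)\Rightarrow(2)$ directly, extracting the circuit $C'$ with $|C'\backslash C|=1$ from the failure of the witness point, whereas the paper runs the identical computation in contrapositive form.
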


\begin{proof} By Proposition 1, it is clear that {\bf (1)} implies {\bf (2)}.\\
We prove that {\bf (2)} implies {\bf (1)}.  We show the contraposition. Let $C$ be a circuit that does not satisfy condition  {\bf (2)}.
We show that for every circuit $C'$ in ${\mathscr C}\,\backslash\{C\}$, the cardinality of $C'\backslash C$
is equal to or more than 2.  By simpleness of $M$,
if $C\cap C'$ is empty, then $|C'\backslash C|=|C'|$ is more than 2. So we assume that $C\cap C'$ is not empty. By axioms for matroid (2), $|C\backslash C'|$ and $|C'\backslash C|$ are more than or equal to one. We use contradiction for proof.
Assume that the cardinality of $C' \backslash C$ is equal to one. Let $e$ be a unique element of $C' \backslash C$. Since $M$ is binary, $C\bigtriangleup C'$ is a disjoint union of circuits $\coprod _{i=1}^{k} C_{i}$. There is a unique circuit $C_{i}$ that contains the element $e$.
$\coprod_{j=1\\,j\neq i}^{k} C_{j}$ does not contain the element $e$. 
We have $\coprod_{j=1\\j\neq i}^{k} C_{j} \subset C\bigtriangleup C' = (C\backslash C')\cup (C'\backslash C) = (C\backslash (C\cap C'))\cup \{e\}\subset C\cup \{e\}$.  Since $M$ is simple, the  cardinality of $C_{i}$ is more than 2. Therefore, $\coprod_{j=1\\j\neq i}^{k} C_{j}$ is a proper subset of $C$.
By the axioms for matroid (2), $k$ is equal to one. Hence
$C\bigtriangleup C'$ is a circuit $C_{1}$. We have $C'\cap C_{1} = C' \cap (C\bigtriangleup C') = C'\cap
 ((C\backslash C')\cup (C'\backslash C)) = (C'\cap (C\backslash C'))\cup (C'\cap (C\backslash C'))=\varnothing \cup \{e\}=\{e\}.$ Since $C'$ contains the element $e$, 
$C'\cap C_{1}$ is equal to $\{e\}$.
$C_{1}\bigtriangleup C' = (C_{1}\cup C')\backslash (C_{1}\cap C')=(((C\backslash C')\cup (C'\backslash C))\cup C')\backslash \{e\}=(C\cup C')\backslash \{e\}=(C\cup C')\backslash (C'\backslash C)=C$. Therefore, the circuit $C$ is the pasting of $C'$ and $C_{1}$, which contradicts the assumption.
 \par
$x=(x_{i})_{i}\in$ ${\bf TP}^{n-1}$ is defined following.
Assign all but one
point of $C$ weight 0. Assign weight 1 to the remaining point of $C$ and to all other
points of the ground set $E$. $x=(x_{i})_{i}$ is not in $V(C)$. Since for every circuit $C'$ in ${\mathscr C}\,\backslash\,\{C\}$, $|C'\backslash C| \geq 2$, $x=(x_{i})_{i}$ is in $V({\mathscr C}\backslash \{C\})$. By Lemma 1, $C$ is in the
intersection of all tropical bases $B_{M}$. 
\end{proof}
\begin{lemma}
Let M be a simple matroid and
B be the set consisting of circuits that can not be obtained by pasting.
 B is a tropical basis of M.
\end{lemma}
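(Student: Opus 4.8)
The plan is to invoke Proposition 1. By that proposition it suffices to prove that every circuit of $M$ not lying in $B$ can be obtained by successively pasting circuits that lie in $B$; once this is established, $B$ is automatically a tropical basis. So the whole task reduces to an induction producing such pasting sequences.

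First I would set up a strong induction on the cardinality of the circuit. Let $C$ be a circuit with $C \notin B$. By the definition of $B$, the circuit $C$ can be obtained by pasting, so there are circuits $C_{1}, C_{2}$ with $|C_{1} \cap C_{2}| = 1$ and $C_{1} \bigtriangleup C_{2} = C$. The decisive computation uses simpleness: since $M$ is simple every circuit has at least three elements, so $|C_{1}|, |C_{2}| \geq 3$, and because $|C_{1} \cap C_{2}| = 1$ we have $|C| = |C_{1}| + |C_{2}| - 2$. Hence $|C_{1}| < |C|$ and $|C_{2}| < |C|$: both circuits produced by the pasting are strictly smaller than $C$. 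This is exactly what makes the induction well-founded, and it also shows that a circuit of minimal cardinality (three, in the simple case) can never arise from a pasting, so every such circuit lies in $B$; these minimal circuits form the base of the induction.

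For the inductive step I would apply the induction hypothesis to $C_{1}$ and $C_{2}$: each is either already a member of $B$ or, being a smaller circuit outside $B$, can be obtained by successively pasting circuits in $B$. Assembling these two pasting sequences and then performing the single pasting $C_{1} \bigtriangleup C_{2} = C$ exhibits $C$ as obtained by successively pasting circuits in $B$. This closes the induction, and Proposition 1 then finishes the proof.

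The step I expect to be the main obstacle is bookkeeping rather than conceptual: one must make precise the notion ``obtained by successively pasting circuits in $B$'' so that it is closed under combining two independently built circuits, not merely under appending a single base circuit, and one must verify that the recursion terminates. The cardinality identity $|C| = |C_{1}| + |C_{2}| - 2$ secures termination, and phrasing the notion recursively — a circuit is obtainable if it lies in $B$, or if it is the pasting $C' \bigtriangleup C''$ of two obtainable circuits — keeps the combination step legitimate and the argument clean.
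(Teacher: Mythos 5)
Your proposal is correct and follows essentially the same route as the paper: the paper's proof likewise observes that by simpleness a pasting $C_{1}\bigtriangleup C_{2}=C$ with $|C_{1}\cap C_{2}|=1$ forces $|C|>|C_{1}|,|C_{2}|$ (your identity $|C|=|C_{1}|+|C_{2}|-2$ makes this explicit), and then concludes by induction on circuit cardinality that Proposition 1's condition holds for $B$. Your version merely fills in the bookkeeping the paper leaves implicit.
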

\begin{proof} By simpleness of $M$, if the pasting of $C_{1}$ and $C_{2}$ is a circuit $C$, then the cardinality of $C$ is more than the cardinality of $C_{1},C_{2}$.
By induction, we can check that $B$ satisfies Proposition 1's condition. Therefore, $B$ is a tropical basis of $M$.\end{proof}

\begin{theorem}
Every simple binary matroid has a unique minimal tropical basis.
\end{theorem}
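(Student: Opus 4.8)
The plan is to assemble the three preceding results---Lemma 2, Proposition 3, and Lemma 3---into a short chain, since they were evidently designed to furnish exactly the ingredients needed here. The key observation is that these results together pin down the set $B_{M}$ precisely as the family of pasting-irreducible circuits.

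First I would invoke Lemma 2, which reduces the claim that $M$ has a unique minimal tropical basis to the single assertion that $B_{M}$ is itself a tropical basis. Thus it suffices to exhibit $B_{M}$ as a tropical basis. Next I would identify $B_{M}$ explicitly. Let $B$ denote the set of circuits that cannot be obtained by pasting. Proposition 3, applied to the simple binary matroid $M$, states that a circuit $C$ fails to lie in $B_{M}$ exactly when $C$ is the symmetric difference $C_{1}\bigtriangleup C_{2}$ of two circuits meeting in a single element, that is, exactly when $C$ is obtained by pasting. Taking contrapositives, $C\in B_{M}$ if and only if $C$ cannot be obtained by pasting, so $B_{M}=B$.

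Finally I would apply Lemma 3: for a simple matroid, the set $B$ of circuits not obtainable by pasting is a tropical basis. Hence $B_{M}=B$ is a tropical basis, and Lemma 2 then yields that $M$ has a unique minimal tropical basis. The degenerate case $|\mathscr{C}|\leq 1$ has already been disposed of immediately before the theorem, so the argument covers all simple binary matroids.

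The genuinely substantive work has already been carried out in Proposition 3, where the binary hypothesis is essential: the property that the symmetric difference of two circuits is a disjoint union of circuits (Theorem 1(3)) is what forces $B_{M}$ to coincide with the pasting-irreducible circuits rather than being a proper subset of them. Consequently I do not expect a real obstacle at the level of the theorem itself; the only point to verify is that the hypotheses line up cleanly---Lemma 2 holds for every matroid, Lemma 3 requires only simpleness, and Proposition 3 is stated for simple binary matroids, matching the theorem's hypothesis exactly. The theorem is therefore a clean corollary of the machinery built up in this section, and the write-up should be only a few lines.
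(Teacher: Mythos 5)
Your argument is correct and is essentially identical to the paper's own proof, which likewise combines Lemma 2, Proposition 4, and Lemma 3 to conclude that $B_{M}$ equals the set of pasting-irreducible circuits and is therefore a tropical basis. The only slip is a label: the characterization of circuits not in $B_{M}$ that you attribute to ``Proposition 3'' (which concerns uniform matroids) is in fact Proposition 4.
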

\begin{proof} Proposition 4 and Lemma 3 show that the intersection of all tropical bases of every simple binary matroid is a tropical basis.
By Lemma 2, every simple binary matroid has a unique minimal tropical basis.
\end{proof}\noindent
Since the regular matroid is binary, every simple regular matroid has a unique minimal tropical basis.

\section{Algorithm discriminating whether a matroid has a unique minimal tropical basis}
We propose an algorithm discriminating whether a matroid has a unique minimal tropical basis.
In \cite{key7} it was shown that we can determine whether $B \subset {\mathscr C}$ is a tropical basis  by $0/1-$points in ${\bf TP}^{n-1}$. Therefore, the condition whether a  subset of the circuit set is a tropical basis is computable.
Let $M$ be a matroid and ${\mathscr C}$ be the circuit set and $k$  be the cardinality of ${\mathscr C}$.
By Lemma 1, the intersection of of tropical bases $B_{M}$ is computable.
We give an order ${\mathscr C} =\{C_{1},\,C_{2}\,, ...\,,\,C_{k}\}$. Let $B_{0}$ be the circuit set $C_{M}$.
For $C_{i}$, if $B_{i}\backslash \{C_{i}\}$ is a tropical basis, then $B_{i+1}$ is defined to be $B_{i}\backslash \{C_{i}\}$. Else $B_{i+1}$ is defined to be $B_{i}$. After repeating this operation for $k$ times, we get a minimal tropical tropical basis $B_{k}$. By Lemma 2, if $B_{k}$ is equal to $B_{M}$, $M$ has a unique minimal tropical basis. Else $M$ has more than one minimal tropical bases.\newline
{\unitlength 0.1in%
\begin{picture}( 39.3000, 22.0000)(  9.9000,-31.0000)%
%
\special{pn 8}%
\special{pa 2180 1070}%
\special{pa 3320 3020}%
\special{pa 1061 3032}%
\special{pa 2180 1070}%
\special{pa 3320 3020}%
\special{fp}%
%
\special{pn 8}%
\special{pa 2170 1070}%
\special{pa 2190 3030}%
\special{fp}%
%
\special{pn 8}%
\special{pa 1570 2150}%
\special{pa 2820 2150}%
\special{fp}%
\special{pa 4920 1680}%
\special{pa 4920 1680}%
\special{fp}%
%
\special{sh 1.000}%
\special{ia 2180 1080 63 63  0.0000000  6.2831853}%
\special{pn 8}%
\special{ar 2180 1080 63 63  0.0000000  6.2831853}%
%
\special{sh 1.000}%
\special{ia 1560 2140 67 67  0.0000000  6.2831853}%
\special{pn 8}%
\special{ar 1560 2140 67 67  0.0000000  6.2831853}%
%
\special{sh 1.000}%
\special{ia 1060 3040 60 60  0.0000000  6.2831853}%
\special{pn 8}%
\special{ar 1060 3040 60 60  0.0000000  6.2831853}%
%
\special{sh 1.000}%
\special{ia 2800 2140 64 64  0.0000000  6.2831853}%
\special{pn 8}%
\special{ar 2800 2140 64 64  0.0000000  6.2831853}%
%
\special{sh 1.000}%
\special{ia 2180 2160 60 60  0.0000000  6.2831853}%
\special{pn 8}%
\special{ar 2180 2160 60 60  0.0000000  6.2831853}%
%
\special{sh 1.000}%
\special{ia 2186 3011 54 59  0.0000000  6.2831853}%
\special{pn 8}%
\special{ar 2186 3011 54 59  0.0000000  6.2831853}%
%
\special{sh 1.000}%
\special{ia 3320 3020 60 60  0.0000000  6.2831853}%
\special{pn 8}%
\special{ar 3320 3020 60 60  0.0000000  6.2831853}%
\put(23.9000,-10.3000){\makebox(0,0)[lb]{1}}%
\put(9.9000,-28.2000){\makebox(0,0)[lb]{2}}%
\put(34.6000,-28.1000){\makebox(0,0)[lb]{3}}%
\put(13.6000,-19.5000){\makebox(0,0)[lb]{4}}%
\put(22.2000,-32.2000){\makebox(0,0)[lb]{5}}%
\put(29.4000,-19.6000){\makebox(0,0)[lb]{6}}%
\put(23.0000,-19.9000){\makebox(0,0)[lb]{7}}%
\end{picture}}%
{\unitlength 0.1in%
\begin{picture}( 20.7500, 12.7200)(  5.0500,-13.5200)%
%
\special{pn 8}%
\special{pa 590 400}%
\special{pa 2480 400}%
\special{fp}%
\special{pa 590 1250}%
\special{pa 2490 1250}%
\special{fp}%
%
\special{sh 1.000}%
\special{ia 580 400 73 73  0.0000000  6.2831853}%
\special{pn 8}%
\special{ar 580 400 73 73  0.0000000  6.2831853}%
%
\special{sh 1.000}%
\special{ia 1460 400 81 81  0.0000000  6.2831853}%
\special{pn 8}%
\special{ar 1460 400 81 81  0.0000000  6.2831853}%
%
\special{sh 1.000}%
\special{ia 2450 390 81 81  0.0000000  6.2831853}%
\special{pn 8}%
\special{ar 2450 390 81 81  0.0000000  6.2831853}%
%
\special{sh 1.000}%
\special{ia 590 1240 85 85  0.0000000  6.2831853}%
\special{pn 8}%
\special{ar 590 1240 85 85  0.0000000  6.2831853}%
%
\special{sh 1.000}%
\special{ia 1460 1260 92 92  0.0000000  6.2831853}%
\special{pn 8}%
\special{ar 1460 1260 92 92  0.0000000  6.2831853}%
%
\special{sh 1.000}%
\special{ia 2460 1250 71 71  0.0000000  6.2831853}%
\special{pn 8}%
\special{ar 2460 1250 71 71  0.0000000  6.2831853}%
\put(5.3000,-2.2000){\makebox(0,0)[lb]{1}}%
\put(14.3000,-2.1000){\makebox(0,0)[lb]{2}}%
\put(25.6000,-2.6000){\makebox(0,0)[lb]{3}}%
\put(5.9000,-10.8000){\makebox(0,0)[lb]{4}}%
\put(14.6000,-10.8000){\makebox(0,0)[lb]{5}}%
%
\put(25.8000,-10.9000){\makebox(0,0)[lb]{}}%
\put(25.5000,-11.5000){\makebox(0,0)[lb]{6}}%
\end{picture}}%
\newline \newline 
We give examples.
Let $P_{7}$ be the matroid that has the left above diagram as geometric representation.
By that algorithm, we know $P_{7}$ has a unique minimal tropical basis. In \cite{key3} it was known that for a field $K$, $P_{7}$ is representable over $K$ if and only if the cardinality of $K$ is more than 2. So it is not binary. Therefore, the converse of Theorem 4 is not true.
Let $R_{6}$ be the matroid that has the right above diagram as geometric representation.
In \cite{key3} it was known that for a field $K$, $R_{6}$ is representable over $K$ if and only if the cardinality of $K$ is more than two. By that algorithm, we know $R_{6}$ has more than one minimal tropical bases.
By Theorem 4 and examples of $P_{7}$ and $R_{6}$, we have the following theorem.
\begin{theorem}
Let $F$ be a set of fields. The following statements are equivalent.
\begin{enumerate}
\setlength{\parskip}{0cm} 
  \setlength{\itemsep}{0cm} 

\renewcommand{\labelenumi}

\item {\bf(1)} If a simple matroid $M$ is representable over all fields in $F$, then $M$ has a unique minimal tropical basis.
\item {\bf(2)} The binary field ${\mathbb F}_{2}$ is in $F$.

\end{enumerate}

\end{theorem}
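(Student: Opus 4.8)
The plan is to prove both implications, leaning entirely on the main result (Theorem 4) and on the two worked examples $P_{7}$ and $R_{6}$ recorded above.

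First I would dispatch the easy direction \textbf{(2)} $\Rightarrow$ \textbf{(1)}. Suppose ${\mathbb F}_{2} \in F$, and let $M$ be a simple matroid representable over all fields in $F$. In particular $M$ is representable over ${\mathbb F}_{2}$, so $M$ is binary by the very definition of binary matroid. Theorem 4 then tells us that every simple binary matroid has a unique minimal tropical basis, so $M$ does; this is precisely the conclusion of statement \textbf{(1)}.

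For the converse \textbf{(1)} $\Rightarrow$ \textbf{(2)} I would argue by contraposition: assuming ${\mathbb F}_{2} \notin F$, I would exhibit a single simple matroid that is representable over every field in $F$ and yet fails to have a unique minimal tropical basis. The crucial elementary observation is that ${\mathbb F}_{2}$ is the unique field of cardinality two, so ${\mathbb F}_{2} \notin F$ forces every field $K \in F$ to satisfy $|K| > 2$. I would then invoke $R_{6}$: as recalled above, $R_{6}$ is representable over a field $K$ if and only if $|K| > 2$, hence $R_{6}$ is representable over every field in $F$; and the discriminating algorithm shows that $R_{6}$ has more than one minimal tropical basis. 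Thus $R_{6}$ is a matroid satisfying the hypothesis of \textbf{(1)} but violating its conclusion, which negates \textbf{(1)} and completes the contrapositive.

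Essentially all of the substantive mathematics is already in place — the hard content sits inside Theorem 4 and inside the algorithmic computation that certifies $R_{6}$ has several minimal tropical bases. The only genuinely new ingredient here is the cardinality remark that singles out ${\mathbb F}_{2}$ as the sole field of size two, and I expect no real obstacle beyond checking that $R_{6}$'s representability criterion ``$|K| > 2$'' matches exactly the complement of ``${\mathbb F}_{2} \in F$.'' I would note in passing that the companion example $P_{7}$, although not needed to prove this theorem, shows that the converse of Theorem 4 is strict: it is a non-binary matroid (representable only over fields with $|K| > 2$) that nonetheless has a unique minimal tropical basis, so binariness is sufficient but not necessary for uniqueness.
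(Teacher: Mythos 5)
Your proposal is correct and matches the paper's intended argument exactly: the paper derives this theorem from Theorem 4 (for the direction where ${\mathbb F}_{2}\in F$) together with the $R_{6}$ example (whose representability criterion $|K|>2$ is precisely the complement of containing ${\mathbb F}_{2}$), which is what you do. Your write-up is in fact more explicit than the paper's one-line justification, and your handling of the contrapositive is sound.
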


$E-mail\ address$: nakajima-yasuhito@ed.tmu.ac.jp
\end{document}